\def\newblock{\ }%
\def\ps@pprintTitle{%
  \let\@oddhead\@empty
  \let\@evenhead\@empty
  \let\@oddfoot\@empty
  \let\@evenfoot\@oddfoot
}
\newtheorem{theorem}{Theorem}
\newcommand\given[1][]{\:#1\vert\:}
\renewcommand{\hl}{}
\begin{document}

\begin{frontmatter}

\title{Recent Advances in Vehicle Routing with Stochastic Demands: Bayesian Learning for Correlated Demands and Elementary Branch-Price-and-Cut}


\author[poly]{Alexandre M. Florio\corref{mycorrespondingauthor}}
\cortext[mycorrespondingauthor]{Corresponding author}
\ead{aflorio@gmail.com}
\author[poly]{Michel Gendreau}
\ead{michel.gendreau@polymtl.ca}
\author[uniwien]{Richard F. Hartl}
\ead{richard.hartl@univie.ac.at}
\author[tum]{Stefan Minner}
\ead{stefan.minner@tum.de}
\author[poly,scale,puc]{Thibaut Vidal}
\ead{thibaut.vidal@polymtl.ca}

\address[poly]{CIRRELT \& Department of Mathematics and Industrial Engineering, Polytechnique Montreal, Quebec, Canada}
\address[uniwien]{Department of Business Decisions and Analytics, University of Vienna, Austria}
\address[tum]{Technical University of Munich, School of Management, and Munich Data Science Institute, Germany}
\address[scale]{SCALE-AI Chair in Data-Driven Supply Chains}
\address[puc]{Departamento de Inform\'{a}tica, Pontif\'{i}cia Universidade Cat\'{o}lica do Rio de Janeiro (PUC-Rio), Brazil}

\begin{abstract}
We consider the vehicle routing problem with stochastic demands (VRPSD), a stochastic variant of the well-known VRP in which demands are only revealed upon arrival of the vehicle at each customer. Motivated by the significant recent progress on VRPSD research, we begin this paper by summarizing the key new results and methods for solving the problem. In doing so, we discuss the main challenges associated with solving the VRPSD under the chance-constraint and the restocking-based perspectives. Once we cover the current state-of-the-art, we introduce two major methodological contributions. First, we present a branch-price-and-cut (BP\&C) algorithm for the VRPSD under optimal restocking. The method, which is based on the pricing of elementary routes, compares favorably with previous algorithms and allows the solution of several open benchmark instances. Second, we develop a demand model for dealing with correlated customer demands. The central concept in this model is the ``external factor'', which represents unknown covariates that affect all demands. We present a Bayesian-based, iterated learning procedure to refine our knowledge about the external factor as customer demands are revealed. This updated knowledge is then used to prescribe optimal replenishment decisions under demand correlation. Computational results demonstrate the efficiency of the new BP\&C method and show that cost savings above 10\% may be achieved when restocking decisions take account of demand correlation. Lastly, we motivate a few research perspectives that, as we believe, should shape future research on the VRPSD.
\end{abstract}

\begin{keyword}
Routing\sep Column generation\sep Optimal restocking\sep Bayesian inference
\end{keyword}

\end{frontmatter}


\section{Introduction}
The vehicle routing problem \citep[VRP;][]{toth2014vehicle} is a foundational problem in transportation logistics. In this paper, we consider the VRP with stochastic demands (VRPSD), a VRP variant in which customer demands are only revealed upon arrival of the vehicle at each customer. The VRPSD has been extensively studied, with the first contribution dating back to more than 50 years ago \citep{tillman1969multiple}. The high interest on the VRPSD is explained by the applicability and difficulty of the problem, which combines elements from discrete and stochastic optimization. From a practical standpoint, the VRPSD arises in applications such as fuel delivery, waste collection, logistics of dairy products, courier services, sludge disposal, emergency logistics, distribution of industrial gases and replenishment of vending machines \citep[see, e.g.,][]{larson1988transporting,chan2001multiple,singer2002fleet,ChepuriHomemDeMello2005,yan2013planning}.

Our first goal in this paper is to review and analyze recent contributions, summarize key results and provide new research avenues on the VRPSD. This in-depth review is motivated by substantial recent progress on VRPSD research. Key contributions include new exact algorithms, theoretical results on restocking policies, and new recourse policies, \hl{which describe rules (and their associated costs) to handle the possibility that the total demand along a route may exceed the vehicle capacity.} These advances enabled the solution of larger instances under more realistic demand distributions and recourse policies, which were known research gaps in this area \citep{Gendreauetal2016}. At the same time, the recent progress shed light on important but relatively less studied VRPSD variants, and suggested future research directions.

Following the review of the state-of-art, the focus of this paper becomes methodological. We first present a new branch-price-and-cut (BP\&C) algorithm for the VRPSD under optimal restocking. This new BP\&C method is based on the pricing of elementary routes (hence, \emph{elementary} BP\&C), and improves considerably upon the dominance-based BP\&C by \cite{Florio_2020}. In particular, the new BP\&C algorithm finds the optimal solution of several previously unsolved benchmark instances, and reduces drastically the runtime requirements for solving instances also solved by the dominance-based BP\&C.

Our second methodological contribution is a Bayesian demand model and an optimal restocking policy for the case of correlated customer demands. The central concept in this demand model is the \emph{external factor}, which is a ``state of the world'' variable that represents unknown demand predictors. During route execution, each time the demand of a customer is revealed we update our beliefs about the external factor in a Bayesian fashion, and use the updated knowledge to prescribe optimal replenishment decisions. We also provide a theoretical convergence result about the Bayesian learning procedure.

Before we proceed, we clarify that the scope of this paper is the \emph{static} VRPSD. In the static paradigm, all customers are known at routing planning stage. In the alternative \emph{dynamic} paradigm, new customers may appear while routes are already in execution. For a review on dynamic VRPs, we refer to the recent work by \cite{Soeffker_2021}.

In summary, our main contributions are the following:
\begin{enumerate}[label=(\roman*)]
\item We review the state-of-art on the VRPSD with a focus on the new contributions and results since the review by \cite{Gendreauetal2016}. We cover both the restocking-based and the chance-constraint modeling paradigms, or \emph{perspectives}, and discuss the most challenging aspects when solving the VPRSD under either perspective. We also motivate future research on the VRPSD by discussing a few important research gaps.
\item We introduce a new state-of-art BP\&C algorithm for the VRPSD under optimal restocking, which is based on the pricing of elementary routes and strong completion bounds. This algorithm compares very favorably with the dominance-based BP\&C by \cite{Florio_2020}. In addition to closing several previously unsolved benchmark instances, we also provide, for the first time, optimal solutions to difficult instances where the total expected demand along a route is allowed to exceed the vehicle capacity.
\item We propose a demand model for correlated demands, together with a Bayesian procedure to update, as customer demands are revealed, the knowledge of a state of the world variable that represents unknown demand covariates. Further, we extend the optimal restocking policy by \cite{Yee_1980} to prescribe optimal replenishment decisions under demand correlation.
\end{enumerate}

The remainder of this paper is organized as follows: Section \ref{sec:probdef} formulates the VRPSD under two modeling paradigms, or perspectives: the restocking-based and the chance-constraint perspectives. Section \ref{sec:review} reviews recent key contributions on the VRPSD and discusses the challenging aspects related to each perspective. Section \ref{sec:bpnc} introduces the elementary BP\&C algorithm for the VRPSD under optimal restocking. Section \ref{sec:corr} presents the Bayesian-inspired demand model and restocking policy for dealing with correlated customer demands. Section \ref{sec:results} reports computational results on the elementary BP\&C algorithm and the optimal restocking policy for correlated demands. Section \ref{sec:conclusions} concludes the paper by summarizing the key takeaways and motivating future research directions.

\section{The VRPSD: One Problem, Two Perspectives}\label{sec:probdef}
In contrast to its deterministic counterpart, in the VRPSD the total demand along a route may exceed the vehicle capacity. Two broad model classes arise, depending on how capacity violations are managed. In the VRPSD with restocking, vehicles may perform replenishment trips to the depot to increase the capacity available along routes. In the VRPSD without restocking, capacity constraints are enforced in a probabilistic way to reduce the likelihood of capacity violations. In this section, we first introduce common notation, and then discuss both modeling perspectives.

\subsection{General Definitions}
The VRPSD is defined on a complete digraph $\mathcal{G}=(\mathcal{V},\mathcal{A})$, where $\mathcal{V}=\{0,\ldots,n\}$ is the set of nodes and $\mathcal{A}=\{(i,j):i,j\in\mathcal{V}\}$ is the set of arcs. Node 0 identifies the depot and the node set $\mathcal{V}^{+}=\mathcal{V}\setminus\{0\}$ identifies customers. A discrete random demand $\xi_{i}$ is associated with each customer $i\in\mathcal{V}^{+}$, and a deterministic cost $c_{ij}$ is associated with each arc $(i,j)\in\mathcal{A}$. A fleet of $m$ homogeneous vehicles is available and each vehicle has a capacity of $Q$.

We define a route as a non-empty sequence of customers $\theta$, and we let $\Theta$ be the set of all feasible routes. We also let $g:\Theta\mapsto\mathbb{R}_{+}$ be a route cost function. The specifications of $\Theta$ and $g(\cdot)$ depend on the model class, as further discussed in Sections \ref{sec:pswithrestock} and \ref{sec:psnorestock}. With binary decision variables $z_{\theta}$, $\theta\in\Theta$, to indicate whether route $\theta$ belongs to the solution, the VRPSD can be modeled generically as a set-partitioning problem:
\begin{align}
&\text{min}&&\sum_{\theta\in\Theta}g(\theta)z_{\theta},\label{eq:spobj}\\
&\text{s.t.}&&\sum_{\theta\in\Theta}\mathbb{I}(i\in\theta)z_{\theta}=1,&i\in\mathcal{V}^{+},\label{eq:spptt}\\
&&&\sum_{\theta\in\Theta}z_{\theta}\leq m,\label{eq:spmaxv}\\
&&&z_{\theta}\in\{0,1\},&\theta\in\Theta,\label{eq:spdom}
\end{align}
where $\mathbb{I}(\cdot)$ is the indicator function.

The model minimizes the total cost of the routes selected in the solution, under the constraints that each customer must be served by exactly one vehicle (constraints \eqref{eq:spptt}) and the fleet size limit is respected (constraint \eqref{eq:spmaxv}). Next, we show how this generic set-partitioning model is specialized to the VRPSD with and without restocking.

\subsection{VRPSD with Restocking}\label{sec:pswithrestock}
In the restocking-based VRPSD all demand must be served, and vehicles are allowed to replenish at the depot to increase the capacity available along routes. Therefore, in this model class we have the notion of a restocking policy, which is a set of rules that govern, during route execution, when vehicles replenish at the depot.

The cost of a route is the sum of the costs of the arcs traversed along the route; therefore, it depends on the restocking policy adopted. Many restocking policies have been proposed, for example, optimal restocking \citep{Yee_1980}, detour-to-depot \citep{Dror_1989}, rule-based restocking \cite{Salavati_Khoshghalb_2019rule}, and the switch policy \citep{FlorioReopt2021}. \hl{The route cost under each of those policies is efficiently computed by dynamic programming. In Section }\ref{sec:optrestock}\hl{, we detail the algorithm to compute costs when the route is executed under optimal restocking.}

To establish the feasible region of the restocking-based VRPSD, we first define the set $\Theta_{\Omega}$ of all possible customer sequences:
\begin{equation}\nonumber
\Theta_{\Omega}=\left\{(v_{1},\ldots,v_{H}):H\geq1\,\wedge\,v_{i}\in\mathcal{V}^{+}\,\wedge\,v_{i}\neq v_{j}\text{ if }i\neq j\right\}.
\end{equation}
Then, the set of feasible routes is given by:
\begin{equation}\nonumber
\Theta=\left\{\theta\in\Theta_{\Omega}:\sum_{i\in\theta}\mathbb{E}\bigl[\xi_{i}\bigr]\leq fQ\right\},
\end{equation}
where $f\geq1$ is the so-called \emph{load factor} parameter.

Traditionally, the VRPSD has been studied assuming a load factor $f=1$, under the argument that, on average, vehicles should be able to serve all demand along a route without restocking. However, in some applications \citep[e.g., waste collection, see][]{Jaunich_2016} restocking (or unloading) is the norm rather than the exception. Therefore, sensible models and algorithms for the VRPSD should allow for larger load factors, even though, as we shall see in Section \ref{sec:resbpnc}, problem difficulty increases considerably with the load factor.

As many efficient algorithms for the VRPSD with restocking are based on the integer L-shaped method \citep{Laporte_1993}, we also present the formulation upon which those algorithms are based. Given binary variables $y_{ij}$, $(i,j)\in\mathcal{A}$, to identify the arcs used in a solution, the two-index model is as follows:
\begin{align}
&\text{min}&&\sum_{(i,j)\in\mathcal{A}}c_{ij}y_{ij}+\mathbb{E}\bigl[\mathcal{Q}^{\pi}(\mathbf{y})\bigr],\label{eq:twoindexobj}\\
&\text{s.t.}&&\sum_{j\in\mathcal{V}^{+}}y_{0j}\leq m,&\nonumber\\
&&&\sum_{i\in\mathcal{V}}y_{ij}=1,&j\in\mathcal{V}^{+},\nonumber\\
&&&\sum_{j\in\mathcal{V}}y_{ij}=1,&i\in\mathcal{V}^{+},\nonumber\\
&&&\sum_{i\in S}\sum_{j\notin S}y_{ij}\geq\left\lceil\sum_{i\in S}\mathbb{E}\bigl[\xi_{i}\bigr]/fQ\right\rceil,&S\subset\mathcal{V}^{+},\label{eq:twoindexrcc}\\
&&&y_{ij}\in\{0,1\},&(i,j)\in\mathcal{A},\nonumber
\end{align}
where $\mathbf{y}=[y_{ij}]_{(i,j)\in\mathcal{A}}$.

The model resembles two-index deterministic VRP models with capacity cut constraints \cite[see, e.g.,][]{toth2014vehicle}, which, in the VRPSD, are adjusted to take account of the load factor. The term $\mathbb{E}[\mathcal{Q}^{\pi}(\mathbf{y})]$ in the objective function \eqref{eq:twoindexobj} refers to the expected restocking cost given the set of planned routes induced by $\mathbf{y}$ and assuming that restocking policy $\pi$ is in effect. Hence, the two-index formulation can be interpreted as a two-stage stochastic program, where the first-stage decision is the set of planned routes. However, the VRPSD with restocking is actually a multi-stage stochastic problem since a decision (e.g., to replenish or not) is taken after each customer is served.

\subsection{Chance-constrained VRPSD}\label{sec:psnorestock}
In certain applications, restocking may not be practical. For example, the depot may be located far from the demand zones and customers may not strictly require all demand to be satisfied. This gives rise to VRPSD models where the set of feasible routes is defined by the following probabilistic constraint (or chance-constraint):
\begin{equation}\nonumber
\Theta=\left\{\theta\in\Theta_{\Omega}:\mathbb{P}\left[\sum_{i\in\theta}\xi_{i}\leq Q\right]\geq1-\epsilon\right\},
\end{equation}
where parameter $\epsilon$ regulates the maximum allowed probability of exceeding the vehicle capacity.

In chance-constrained models, the cost of a route $\theta=(v_{1},\ldots,v_{H})$ is deterministic and given by $g(\theta)=c_{0v_{1}}+\sum_{i\in\{2,\ldots,H\}}c_{v_{i-1}{i}}+c_{v_{H}0}$. Hence, in this perspective the demand uncertainty affects only the feasible solution space.

\section{Review of Recent Contributions}\label{sec:review}
Rather than cataloging all VRPSD work since \cite{tillman1969multiple}, we review in detail the key theoretical and methodological contributions that appeared since the review by \cite{Gendreauetal2016}. For a more encyclopedic survey of the VRPSD, and stochastic routing in general, we refer to \cite{Oyola_2016part2,Oyola_2016part1}. Our review is split into two broad sections, VRPSD with and without restocking, which correspond to the two modeling perspectives discussed in Section \ref{sec:probdef}. \hl{For convenience, Tables }\ref{tab:reviewR}\hl{ and }\ref{tab:reviewCC}\hl{ highlight the main contributions of each work reviewed in this section.}

\begin{table}[t]
\centering
\begingroup
\small
\renewcommand{\arraystretch}{0.75}
\caption{\label{tab:reviewR}\hl{Key Recent VRPSD Contributions (Restocking Perspective)}}
\begin{tabular}{ll}
\toprule
Reference & Main contributions\\
\midrule
\cite{Bertazzi_2018} & \makecell[l]{Theoretical bounds on the cost of routes under optimal\\restocking and detour-to-depot policies.}\\
\cite{Florio_2020} & \makecell[l]{BP\&C algorithm for the VRPSD under optimal restocking;\\results on instances with high value of stochastic solution.}\\
\cite{Florio_2020ejor} & \makecell[l]{Unified mixed-integer linear model for the single-VRPSD\\under optimal restocking.}\\
\cite{Bertazzi_2020} & \makecell[l]{Forward dynamic programming procedure to approximate\\the cost of a route under optimal restocking.}\\
\cite{Louveaux_2018} & \makecell[l]{First exact algorithm (integer L-shaped) for the VRPSD\\under optimal restocking; results on i.i.d. customer demands.}\\
\cite{Salavati_Khoshghalb_2019exact} & \makecell[l]{Integer L-shaped method for the VRPSD under optimal\\restocking; results on non-i.i.d. customer demands.}\\
\cite{Salavati_Khoshghalb_2019hybrid} & \makecell[l]{Preventive restocking policy that balances risk and expected\\cost of failures; integer L-shaped algorithm.}\\
\cite{Salavati_Khoshghalb_2019rule} & \makecell[l]{Customer-specific, rule-based restocking policy; integer\\L-shaped algorithm.}\\
\cite{FlorioReopt2021} & \makecell[l]{First exact algorithm (BP\&C) for a VRPSD under a\\(partial) reoptimization recourse policy.}\\
\cite{Florio_2021} & \makecell[l]{B\&P algorithm for a duration-constrained VRPSD with\\restocking; Monte Carlo method for checking route feasibility.}\\
\cite{delavega2021} & \makecell[l]{Integer L-shaped algorithm for a VRPSD with time windows\\under rule-based restocking.}\\
\cite{ymro2022} & \makecell[l]{New families of valid inequalities and improved L-shaped\\algorithm for the VRPSD under optimal restocking.}\\
\textbf{This paper} & \makecell[l]{Elementary BP\&C algorithm; optimal restocking policy for\\positively correlated customer demands.}\\
\bottomrule
\end{tabular}
\endgroup
\end{table}

\subsection{VRPSD with Restocking}
A well-known result on the VRPSD is that, given a customer sequence, the optimal restocking policy is of a threshold type and can be computed by dynamic programming \citep{Yee_1980}.  There are both theoretical and empirical results that justify adopting optimal restocking in place of detour-to-depot, which is the policy employed by all exact methods proposed before the review by \cite{Gendreauetal2016}. \cite{Bertazzi_2018} show that the expected cost of a route executed under detour-to-depot is up to 100\% higher than the expected cost of the same route executed under optimal restocking. They also show, by means of a somewhat extreme example, that the expected cost of the best detour-to-depot route (that is, the route with minimum expected cost when executed under detour-to-depot) to visit a given set of customers may be 50\% higher than the expected cost of the best optimal restocking route (that is, the route with minimum expected cost when executed under optimal restocking) to visit the same set of customers.

\cite{Florio_2020ejor} provide empirical evidence that the best optimal restocking route outperforms the best detour-to-depot route by a moderate margin, also in less pathological instances than those used by \cite{Bertazzi_2018} to demonstrate theoretical results. The difference in expected cost between the two policies, when routes are optimized for each policy, may exceed 8\%. One key insight from \cite{Florio_2020ejor} is that differences in expected costs between both policies are minor when the total expected demand along a route does not exceed the vehicle capacity. This empirical result is confirmed by \cite{Florio_2020}, where best VRPSD solutions under detour-to-depot and optimal restocking are compared on several benchmark instances with a load factor $f=1$. This comparison shows that there is little benefit in adopting optimal restocking when the total demand along a route cannot exceed the vehicle capacity.

\cite{Florio_2020ejor} studies the single-vehicle version of the VRPSD and propose a mixed-integer linear model for finding the optimal route under optimal restocking. The single-VRPSD, which is a special case of the VRPSD when $\sum_{i\in\mathcal{V}^{+}}\mathbb{E}[\xi_{i}]\leq fQ$, finds practical applications, for example, in waste collection operations where customers are partitioned in districts \citep{ghiani2014}. The single-VRPSD is an impressively hard problem: currently, there is no algorithm able to solve general instances with as few as 25 customers under optimal restocking. \cite{Bertazzi_2020} propose a forward dynamic programming procedure to approximate efficiently the cost of a route under optimal restocking. Even though this cost can be computed exactly by solving the original dynamic program by \cite{Yee_1980}, a fast approximation is useful within rollout procedures to find good-quality single-VRPSD solutions, considering, again, the absence of effective exact methods for solving this variant.

The first algorithm for the VRPSD under optimal restocking was proposed by \cite{Louveaux_2018}. This integer L-shaped method introduces lower bounds on the restocking cost of any feasible solution. Two methods for computing these so-called \emph{global} bounds are proposed, the most effective of which relies on independent and identically distributed (i.i.d.) customer demands. Furthermore, \cite{Louveaux_2018} adapt the lower bounding functions originally proposed by \cite{jabali2014} for the case of detour-to-depot restocking. These functions compute lower bounds on the restocking cost of fractional solutions consisting of partial routes. Again, special bounds are developed for the case of i.i.d. demands. Overall, the method is effective for solving instances with up to 100 customers and 3 vehicles, under i.i.d. demands with up to nine possible demand values.

\cite{Salavati_Khoshghalb_2019exact} propose another integer L-shaped algorithm for the VRPSD with optimal restocking. The methodology developments are similar to \cite{Louveaux_2018} in that global lower bounds and adaptations of the bounding functions from \cite{jabali2014} are derived. Differently from \cite{Louveaux_2018}, however, results on instances with non-identical demand distributions are also reported. Demands follow discrete distributions with up to five possible demand values, and the largest solved instance has 60 customers and 4 vehicles. \hl{Still on integer L-shaped methods, }\cite{ymro2022}\hl{ propose an improved algorithm based on stronger partial route inequalities and three families of the so-called \emph{split} inequalities, which decompose the recourse cost by route. Compared to previous methods, this algorithm requires significant less runtime for solving the instances proposed by }\cite{Louveaux_2018}\hl{, and is also able to solve some of those instances for the first time.}

The first BP\&C algorithm for the VRPSD under optimal restocking was introduced by \cite{Florio_2020}. Two main ideas enabled the application of the branch-and-price framework for solving the problem under general independent demand distributions. First, a pricing algorithm based on a backward labeling strategy, in which the intermediate results of the optimal restocking dynamic program (i.e., the cost-to-go functions) are stored in each label. Second, the derivation of label dominance rules for the stochastic case. \cite{Florio_2020} also determine empirically that the value of stochastic solution (VSS) of many instances proposed and solved by \cite{Louveaux_2018} are very small (i.e., less than 1\%), which is mostly not the case in instances with fewer customers per route (i.e., eight or less) and Poisson distributed demands.

Exact methods for solving the VRPSD under alternative restocking policies have also been proposed. \cite{Salavati_Khoshghalb_2019rule} consider rule-based policies, which are policies that prescribe preventive replenishment trips whenever the remaining vehicle capacity falls below preset customer-specific thresholds. \cite{Salavati_Khoshghalb_2019hybrid} propose a restocking policy that considers both the risk of a failure (i.e., not having enough capacity to serve the demand of a customer) and the expected cost of failures along the untravelled portion of the route; hence, a \emph{hybrid} policy. The main idea in rule-based and hybrid policies is to benefit from preventive replenishment without having to solve a dynamic program to compute the optimal restocking policy. Integer L-shaped algorithms are developed to solve the VRPSD under both policies, and the results show that best rule-based or hybrid restocking solutions are only marginally worse than the best optimal restocking solutions.

Note that the VRPSD models from \cite{Louveaux_2018}, \cite{Salavati_Khoshghalb_2019exact}, \cite{Salavati_Khoshghalb_2019rule} and \cite{Salavati_Khoshghalb_2019hybrid} assume a load factor $f=1$. The model used by \cite{Florio_2020} allows larger load factors, but only instances with $f=1$ are solved to optimality. As mentioned, compared to the detour-to-depot policy, the relative cost savings achieved by more advanced restocking policies are usually only marginal when the load factor is set to $f=1$.

One possible step towards better solutions to the restocking-based VRPSD could be to solve the problem under policies that allow not only preventive replenishment, but also \emph{sequencing} decisions. In this direction, \cite{FlorioReopt2021} propose a BP\&C algorithm to solve the VRPSD under the switch policy, which is a recourse policy that allows swapping the visiting order of any two adjacent customers along a planned route. However, even though problem difficulty increases considerably under the switch policy, the best solutions under this policy are only marginally better than the best optimal restocking solutions, even in instances with a load factor above one.

Because of the possibility of replenishment trips, in the restocking-based VRPSD the arrival time at each customer is, in general, stochastic. Therefore, variants of the problem with timing constraints (e.g., time windows or deadlines) become more challenging, as these constraints need to be treated either as probabilistic or soft constraints (i.e., have their violations penalized). \cite{Florio_2021} propose a branch-and-price algorithm to solve a VRPSD under optimal restocking in which routes must only comply with a maximum duration chance-constraint. The resulting problem is significantly more challenging than the usual setting, as the absence of a load limit along a feasible route (given by $fQ$ in the usual model) precludes the separation of rounded capacity cuts and the use of completion bounds based on the load limit. Finally, \cite{delavega2021} develop an integer L-shaped algorithm to solve a VRPSD variant with time windows under a rule-based restocking policy, where an additional recourse action is taken in case of time window violations.

\begin{table}[t]
\centering
\begingroup
\small
\renewcommand{\arraystretch}{0.75}
\caption{\label{tab:reviewCC}\hl{Key Recent VRPSD Contributions (Chance-constraint / Robust Perspective)}}
\begin{tabular}{lcl}
\toprule
Reference & Main contributions\\
\midrule
\cite{dinh2018exact} & \makecell[l]{Branch-and-cut and BP\&C methods for a chance-constrained\\VRPSD under scenario-based demand distributions.}\\
\cite{Noorizadegan_2018} & \makecell[l]{B\&P algorithms for the chance-constrained VRPSD under\\independent demands.}\\
\cite{sluijk2021} & \makecell[l]{Feasibility bounds to determine route compliance with chance-\\constraints; multi-label pricing strategy for a two-echelon setting.}\\
\cite{Pessoa_2021} & \makecell[l]{Reformulation of the robust VRPSD with knapsack uncertainty as a\\deterministic heterogeneous fleet capacitated VRP.}\\
\cite{Ghosal_2020} & \makecell[l]{Efficient capacity cuts separation and branch-and-cut algorithms\\for the distributionally robust VRPSD.}\\
\cite{munari2019} & \makecell[l]{BP\&C algorithm for a robust VRP with both travel time and\\demand uncertainty; compact model to obtain robust solutions.}\\
\cite{Gounaris_2016} & \makecell[l]{Characterization of demand uncertainty sets for which it is possible\\to efficiently determine route feasibility.}\\
\bottomrule
\end{tabular}
\endgroup
\end{table}

\subsection{Chance-constrained VRPSD}
In spite of relatively less attention devoted to this perspective in recent years, significant progress has been achieved in the chance-constrained VRPSD. As seen in Section \ref{sec:psnorestock}, in this variant the demand stochasticity affects only the feasible solution space. Given a subset $S\subset\mathcal{V}^{+}$, the key challenge in the chance-constrained VRPSD is to compute efficiently, and without strong distributional assumptions, the minimum number of vehicles required to fully serve all customers in $S$ at a service level of $1-\epsilon$. We denote this quantity by $r_{\epsilon}(S)$:
\begin{equation}\nonumber
r_{\epsilon}(S)=\min\left\{k\in\mathbb{N}:\mathbb{P}\left[\sum_{i\in S}\xi_{i}\leq kQ\right]\geq1-\epsilon\right\}.
\end{equation}

\cite{dinh2018exact} propose lower bounds on $r_{\epsilon}(S)$ and extend the edge-based chance-constrained VRPSD formulation by \cite{Laporte_1989} to the case of general (i.e., possibly dependent) customer demands. The proposed bounds are computationally tractable whenever the quantiles of the distribution of $\sum_{i\in S}\xi_{i}$ can be readily computed, which is the case, for example, in multivariate normal and scenario-based demand distributions. The edge-based formulation serves as base for branch-and-cut algorithms that are capable of solving instances with up to 45 customers. Further, \cite{dinh2018exact} present a BP\&C algorithm, in which relaxed chance-constrained feasible q-routes are used to guarantee tractability of the pricing problem. With respect to capacity constraints, this relaxation defines an extended route set $\Theta'\supset\Theta$:
\begin{equation}\nonumber
\Theta'=\left\{\theta\in\Theta_{\Omega}:\sum_{i\in\theta}\mathbb{E}\bigl[\xi_{i}\bigr]\leq Q'\right\},
\end{equation}
where $Q'$ is computed by solving a chance-constrained knapsack problem in a preprocessing step. Overall, the BP\&C method is effective for solving instances with up to 55 customers.

\cite{Noorizadegan_2018} propose a branch-and-price algorithm for the chance-constrained VRPSD when demands are independent and follow Poisson, Gaussian or scenario-based distributions. An elementary pricing strategy is adopted, and dominance rules are derived for each demand distribution.

\cite{sluijk2021} study a two-echelon VRPSD where second-echelon routes must comply with probabilistic capacity constraints. Lower bounds are computed by column generation, and the methods employed to verify route feasibility are also applicable to the single-echelon case. In particular, feasibility bounds are derived to compute lower and upper limits $\underline{Q}$ and $\overline{Q}$ such that a route $\theta$ is feasible if $\sum_{i\in\theta}\mathbb{E}[\xi_{i}]\leq\underline{Q}$, and infeasible if $\sum_{i\in\theta}\mathbb{E}[\xi_{i}]\geq\overline{Q}$. Similar bounds based on the variance of $\sum_{i\in\theta}\xi_{i}$ are proposed. When route feasibility cannot be determined with those bounds, alternative strategies based on convoluting demand distributions (in case of independent demands) and Monte Carlo simulation (in case of general demands) are employed.

Most work on chance-constrained VRPSDs assume that demand distributions are known. This assumption is reasonable, considering that the VRP is an operational problem that is solved for many periods, and that scenario-based (or empirical) demand distributions can be retrieved from historical data. In cases where such data might not be available, however, the VRPSD may be modeled as a robust or distributionally robust optimization problem. \cite{Pessoa_2021} reformulates the robust VRPSD with knapsack uncertainty as a heterogeneous fleet capacitated VRP; therefore, allowing the direct solution of the robust version by algorithms developed for the deterministic variant. \cite{Ghosal_2020} study the distributionally robust VRPSD, where the demand distribution is only partially known. The proposed branch-and-cut method relies on an efficient separation of capacity cuts, which is possible when demands belong to ambiguity sets that satisfy a subadditivity property. \cite{munari2019} investigate a robust VRP with both travel time and demand uncertainty. In addition to a specialized BP\&C algorithm, they also propose a compact model that can be used to obtain robust solutions in instances with up to 100 customers. \cite{Gounaris_2016} characterize demand uncertainty sets for which it is possible to efficiently determine route feasibility. We note that optimal solutions to robust models tend to assume worst-case demand distributions and, consequently, lead to underutilization of vehicle capacity and additional fleet requirements \citep{dinh2018exact}.

\section{Branch-Price-and-Cut for the VRPSD under Optimal Restocking}\label{sec:bpnc}
Branch-and-price is an effective method for solving several VRP variants. We refer to \cite{Costa_2019} for an in-depth review of the methodology. When solving VRPs by branch-and-price, the column generation subproblem is normally a variant of the elementary resource-constrained shortest path problem \citep[RCSP;][]{Feillet_2004}. By and large, efficient algorithms for deterministic VRPs relax the elementary condition and solve the pricing problem with a labeling procedure that relies on dominance rules to discard unpromising partial paths. Experience has shown, however, that some stochastic VRPs are efficiently solved by an elementary pricing strategy, in which the combinatorial growth of labels is controlled only with completion bounds. In this section, we present an elementary BP\&C algorithm for the VRPSD under optimal restocking that improves considerably upon the dominance-based BP\&C by \cite{Florio_2020}.

\subsection{Route Cost under Optimal Restocking}\label{sec:optrestock}
The elementary BP\&C algorithm is based on the generic set-partitioning model \eqref{eq:spobj}-\eqref{eq:spdom}. Before describing the algorithm, we specify the cost function $g(\theta)$ when route $\theta$ is executed under optimal restocking. As in \cite{FlorioReopt2021}, we use the functionals $\phi'(i,j,q,\Phi(\cdot))$ and $\phi''(i,j,\Phi(\cdot))$ to facilitate the presentation:
\begin{gather*}
\phi'(i,j,q,\Phi(\cdot))=c_{ij}+\mathbb{E}\Bigl[\Upsilon_{\xi_{j}}^{q}(c_{j0}+c_{0j})+\Phi(q+Q\Upsilon_{\xi_{j}}^{q}-\xi_{j})\Bigr],\\
\phi''(i,j,\Phi(\cdot))=c_{i0}+c_{0j}+\mathbb{E}\Bigl[\Upsilon_{\xi_{j}}^{Q}(c_{j0}+c_{0j})+\Phi(Q+Q\Upsilon_{\xi_{j}}^{Q}-\xi_{j})\Bigr].
\end{gather*}

Functional $\phi'(i,j,q,\Phi(\cdot))$ calculates the expected remaining cost once customer $i$ is fully served, given that customer $j$ is visited directly after $i$, the remaining vehicle capacity is $q$, and the cost-to-go once $j$ is fully served, as a function of the remaining capacity, is given by $\Phi:\{0,\ldots,Q\}\mapsto\mathbb{R}$. The quantity $\Upsilon_{\xi}^{q}=\max\{0,\lceil(\xi-q)/Q\rceil\}$ refers to the number of replenishment trips required to fully serve a demand of $\xi$ when the remaining capacity is $q$. Similarly, $\phi''(i,j,\Phi(\cdot))$ calculates the expected remaining cost once customer $i$ is fully served, given that the vehicle replenishes at the depot before visiting customer $j$.

Given a route $\theta=(v_{1},\ldots,v_{H})$, $\Phi_{\theta}^{k}(q)$ denotes the expected cost-to-go along $\theta$ once customer $v_{k}$, $k\in\{1,\ldots,H\}$, is fully served and the remaining vehicle capacity is $q$. This cost is computed by solving the dynamic program:
\begin{equation}\nonumber
\Phi_{\theta}^{k}(q)=\begin{cases}
\min\bigl\{\phi'(v_{k},v_{k+1},q,\Phi_{\theta}^{k+1}(\cdot)),\phi''(v_{k},v_{k+1},\Phi_{\theta}^{k+1}(\cdot))\bigr\},& \text{if }k<H,\\
c_{v_{H}0},& \text{if }k=H.
\end{cases}
\end{equation}

Hence, the cost of route $\theta$ under optimal restocking, denoted by $g^{*}(\theta)$, is given by:
\begin{equation}\nonumber
g^{*}(\theta)=\phi'(0,v_{1},Q,\Phi_{\theta}^{1}(\cdot)).
\end{equation}

\subsection{Valid Inequalities}
To improve the continuous relaxation bound given by the set-partitioning model, we separate rounded capacity and subset row inequalities. Rounded capacity cuts (RCCs) correspond to constraint \eqref{eq:twoindexrcc} of the two-index formulation. When translated to the set-partitioning model, these cuts are of the following form:
\begin{equation}\label{eq:rccs}
\sum_{\theta\in\Theta}\sum_{i\in S}\sum_{j\notin S}a_{\theta}^{ij}z_{\theta}\geq\left\lceil\sum_{i\in S}\mathbb{E}\bigl[\xi_{i}\bigr]/fQ\right\rceil,\qquad\forall S\subset\mathcal{V}^{+},
\end{equation}
where $a_{\theta}^{ij}$ is a binary coefficient that indicates whether route $\theta$ traverses arc $(i,j)$.

In addition to RCCs, we also separate subset row cuts \citep[SRCs;][]{Jepsen_2008} defined over customer triplets:
\begin{equation}\label{eq:srcs}
\sum_{\theta\in\Theta}\left\lfloor\frac{1}{2}\sum_{i\in S}\mathbb{I}(i\in\theta)\right\rfloor z_{\theta}\leq1,\qquad\forall S\subset\mathcal{V}^{+},|S|=3.
\end{equation}

SRCs play a key role in the efficiency of the elementary BP\&C algorithm. In dominance-based BP\&C, the separation of these cuts must be done in a controlled way, since each cut induces a new resource over which dominance must also be verified \citep{Poggi_2014}. Since the elementary approach does not rely on dominance rules, SRCs can be separated in a more aggressive way.

\subsection{Pricing Problem}
We refer to the linear relaxation of formulation \eqref{eq:spobj}-\eqref{eq:spdom} with a restricted set of routes $\Theta'\subset\Theta_{\Omega}$ as the restricted master problem (RMP). We let $\mathcal{C}$ and $\mathcal{J}$ be sets of customer sets for which RCCs and SRCs are separated and added to the RMP, respectively. Further, given a solution to the RMP, we let $\alpha_{i}$ ($i\in\mathcal{V}^{+}$), $\beta$, $\gamma_{S}$ ($S\in\mathcal{C}$) and $\delta_{S}$ ($S\in\mathcal{J}$) be the dual values associated with constraints \eqref{eq:spptt}, \eqref{eq:spmaxv}, \eqref{eq:rccs} and \eqref{eq:srcs}, respectively. When solving the relaxed set-partitioning model by column generation, the pricing problem identifies a route with a negative reduced cost, that is, a route $\theta\in\Theta_{\Omega}\setminus\Theta'$ such that:
\begin{equation}\nonumber
g^{*}(\theta)-\zeta(\theta)<0,
\end{equation}
where $\zeta(\theta)$ is the contribution of the dual values to decrease the reduced cost of $\theta$:
\begin{equation}\label{eq:duals}
\zeta(\theta)=\sum_{i\in\theta}\alpha_{i}+\beta+\sum_{S\in\mathcal{C}}\sum_{i\in S}\sum_{j\notin S}a_{\theta}^{ij}\gamma_{S}+\sum_{S\in\mathcal{J}}\left\lfloor\frac{1}{2}\sum_{i\in S}\mathbb{I}(i\in\theta)\right\rfloor\delta_{S}.
\end{equation}

\subsection{Backward Labeling}
We solve the pricing problem with a backward labeling procedure, in which a label represents a partial path from a customer up to the depot. The choice for backward (instead of forward) labeling is justified by noting that the computation of the route cost $g^{*}(\theta)$ starts at the last customer along route $\theta$. Therefore, performing labeling backwards allows us to store, in each label, the intermediate results of the dynamic program, that is, the cost-to-go value function. These costs are then readily available for all possible backward extensions of a partial path.

Given a label $\ell$, $\ell_{\theta}\in\Theta_{\Omega}$ denotes the partial path represented by $\ell$; $\ell_{v}\in\mathcal{V}^{+}$ indicates the current customer; $\ell_{q}\in\{0,\ldots,fQ\}$ denotes the available load; $\ell_{\zeta}\in\mathbb{R}$ represents the total contribution of the dual values to the reduced cost; and $\ell_{\Phi(\cdot)}:\{0,\ldots,Q\}\mapsto\mathbb{R}$ represents the cost-to-go once $\ell_{v}$ is fully served, as function of the remaining vehicle capacity. Label attributes are initialized as described in Table \ref{tab:attribs}. Given a label $\ell^{2}$ obtained by extending a label $\ell^{1}$ to a customer $i$, the cost-to-go function $\ell^{2}_{\Phi(\cdot)}$ is computed by performing one step of the optimal restocking dynamic programming algorithm for each $q\in\{0,\ldots,Q\}$:
\begin{equation}\label{eq:ctgupdt}
\ell^{2}_{\Phi(q)}=\min\bigl\{\phi'(i,\ell^{1}_{v},q,\ell^{1}_{\Phi(\cdot)}),\phi''(i,\ell^{1}_{v},\ell^{1}_{\Phi(\cdot)})\bigr\},
\end{equation}
and the remaining attributes of $\ell^{2}$ are updated as described in Table \ref{tab:attribs}. \hl{Essentially, the partial path of $\ell^{2}$ is set by prepending customer $i$ to the partial path of $\ell^{1}$. The current customer of $\ell^{2}$ is set to~$i$. The available load decreases by the mean demand of customer $i$. Lastly, the total contribution of dual values along the partial path of $\ell^{2}$ is computed by Equation }\eqref{eq:duals}\hl{.}

\begin{table}[h!]
\centering
\begingroup
\small
\renewcommand{\arraystretch}{0.825}
\caption{\label{tab:attribs}Label Attributes, Initialization and Updating Rules}
\begin{tabular}{llrr}
\toprule
Attribute & Description & Initialization$^a$ & Update$^b$\\
\midrule
$\ell_{\theta}$ & Partial path & $(i)$ & $\ell^{2}_{\theta}=(i)\oplus\ell^{1}_{\theta}$\\
$\ell_{v}$ & Current customer & $i$ & $\ell^{2}_{v}=i$\\
$\ell_{q}$ & Available load & $fQ-\mathbb{E}\bigl[\xi_{i}\bigr]$ & $\ell^{2}_{q}=\ell^{1}_{q}-\mathbb{E}\bigl[\xi_{i}\bigr]$\\
$\ell_{\Phi(q)}\quad q\in\{0,\ldots,Q\}$ & Cost-to-go function & $c_{i0},\quad\forall q$ & See Equation \eqref{eq:ctgupdt}\\
$\ell_{\zeta}$ & Sum of dual values & $\zeta((i))$ & $\zeta((i)\oplus\ell^{1}_{\theta})$\\
\bottomrule
\end{tabular}
\caption*{$^a$Assuming initialization to customer $i$.\\$^b$Assuming label $\ell^{2}$ is created by extending label $\ell^{1}$ to customer $i$; $\oplus$ denotes sequence concatenation.}
\endgroup
\end{table}

The labeling procedure initially creates labels corresponding to the backward extensions from the depot to each customer. Each label $\ell$ is extended to each customer $i\notin\ell_{\theta}$ such that $\mathbb{E}[\xi_{i}]\leq\ell_{q}$. Routes with a negative reduced cost are saved for inclusion in the RMP, and the combinatorial growth of labels is controlled with completion bounds, as detailed next.

\subsection{Completion Bounds}
A completion bound is a lower bound on the reduced cost of all routes that can be generated from a label. Labels with nonnegative bounds can be discarded because they cannot generate negative reduced cost routes. Although completion bounds may also be employed in dominance-based BP\&C, when pricing only elementary routes it is possible to derive stronger bounds that leverage the elementary condition. In this section, we present strengthened versions of the bounds employed by \cite{Florio_2020}.

In what follows, let $\Theta_{\ell}\subset\Theta_{\Omega}$ be the set of routes that can be generated from a label $\ell$, and let $\sigma_{ij}=\alpha_{j}+\sum_{S\in\mathcal{C}}\mathbb{I}(i\in S,j\notin S)\gamma_{S}$ be the reduced cost decrease due to constraints \eqref{eq:spptt} and \eqref{eq:rccs} when extending a label from customer $j$ to customer $i$.

\subsubsection{Knapsack Bound}

The reduced cost of the route represented by a label $\ell$ is given by $g^{*}(\ell_{\theta})-\ell_{\zeta}$. Note that $g^{*}(\ell_{\theta'})\geq g^{*}(\ell_{\theta})$ for all $\theta'\in\Theta_{\ell}$. Considering that only the dual values associated with constraints \eqref{eq:spptt} and \eqref{eq:rccs} may promote reduced cost decrease (the dual values associated with inequalities \eqref{eq:srcs} are nonpositive), the reduced costs of all routes in $\Theta_{\ell}$ are lower-bounded by:
\begin{equation}\nonumber
g^{*}(\ell_{\theta})-\ell_{\zeta}-\textsc{kp}(\ell),
\end{equation}
where \textsc{kp}$(\ell)$ is the optimal solution value of a $\{0,1\}$-knapsack problem with capacity $\ell_{q}$, and with the value and weight of each item $j\in\mathcal{V}^{+}\setminus\ell_{\theta}$ given by $\max_{(i,j)\in\mathcal{A}}\sigma_{ij}$ and $\mathbb{E}[\xi_{j}]$, respectively. This bound improves upon the unbounded knapsack bound proposed by \cite{Florio_2020}, since the elementary condition is used to forbid items corresponding to customers already visited.

\subsubsection{RCSP Bound}\label{sec:rcspbound}
Given a customer set $S$, we define the auxiliary graph $\mathcal{G}'(S)=(\mathcal{V},\mathcal{A}'(S))$, with set of nodes $\mathcal{V}$ (as in our original graph $\mathcal{G}$) and set of arcs $\mathcal{A}'(S)=\{(i,j)\in\mathcal{A}:i\notin S,j\neq0\}$. With each arc $(i,j)\in\mathcal{A}'(S)$ a modified cost $\overline{c}_{ij}=c_{ij}-\sigma_{ij}$ is associated. We further denote by \textsc{rcsp}$(S,q,i)$ the RCSP in graph $\mathcal{G}'(S)$ from the depot to customer $i$, where the initial capacity at the depot is $q$ and each visit to customer $j$ consumes a capacity of $\mathbb{E}[\xi_{j}]$.

Given a label $\ell$, $\ell_{\Phi(Q)}$ determines a lower bound on the cost-to-go once customer $\ell_{v}$ is fully served. Therefore, for any customer set $\mathcal{M}\subset\mathcal{V}^{+}$ a completion bound on $\ell$ is given by:
\begin{equation}\label{eq:rcspcb}
\textsc{rcsp}\!\left(\ell_{\theta}\cap\mathcal{M},\ell_{q}+\mathbb{E}\bigl[\xi_{\ell_{v}}\bigr],\ell_{v}\right)+\alpha_{\ell_{v}}+\ell_{\Phi(Q)}-\ell_{\zeta}.
\end{equation}

The bound given by \eqref{eq:rcspcb} improves upon the RCSP-based bound by \cite{Florio_2020} in that the shortest path from the depot to node $\ell_{v}$ cannot pass through a customer $i\in\ell_{\theta}\cap\mathcal{M}$. As in \cite{Florio_2021evrp}, at each iteration of the pricing algorithm we define the set $\mathcal{M}$ to contain customers that promote large reduced cost decrease per unit of resource consumed (e.g., the $M$ customers with largest $\alpha_{i}/\mathbb{E}[\xi_{i}]$ ratios, where $M$ is a parameter). The idea is to make bounds tighter when customers in $\mathcal{M}$ are already visited by a label. Before the labeling procedure begins, \textsc{rcsp}$(S,q,i)$ is pre-computed for all $S\subset\mathcal{M}$, $q\in\{0,\ldots,fQ\}$ and $i\in\mathcal{V}^{+}$. Then, each time a label is generated, bound \eqref{eq:rcspcb} can be evaluated in constant time.

\subsection{Cut Separation and Branching}
Lower bounds are obtained by alternating between column generation and separation of RCCs until there are no negative reduced cost columns nor violated RCCs left. Then, SRCs are separated and added to the RMP whenever the maximum possible lower bound increase due to SRCs exceeds 0.0075\% (at the root node of the branch-and-bound tree) or 0.03\% (at other branch-and-bound nodes). This lower bound increase is computed by considering only the routes already added to the RMP. Column generation and separation of RCCs start again once SRCs are separated, and the procedure repeats until there are no more SRCs that promote a lower bound increase above the required threshold. RCCs are separated with the CVRPSEP package by \cite{Lysgaard_2004}, and SRCs are separated by enumeration. Finally, up to eight SRCs are added to the RMP per iteration to promote faster convergence and to avoid solving the pricing problem to optimality too often.

We adopt the same branching rule as in \cite{Florio_2020}. Whenever the linear bound obtained at a branch-and-bound node corresponds to a fractional solution, we branch on an arc $(i,j)$ such that $\sum_{\theta\in\Theta'}a_{\theta}^{ij}z_{\theta}$ is fractional. Typically, many such arcs are available in a fractional solution, so we select the arc that, upon branching, leads to the highest lower bound increase when considering only the columns and rows already generated.

\section{A Demand Model and Restocking Policy for Correlated Demands}\label{sec:corr}
We now propose a demand model based on Bayesian inference to handle the case of correlated customer demands. Furthermore, we extend the optimal restocking algorithm, described in Section~\ref{sec:optrestock} for the case of independent demands, to prescribe optimal replenishment decisions under demand correlation.

\subsection{Demand Model}\label{sec:baydmodel}
The main concept in our demand model is the external factor, which represents censored demand covariates (or predictors) such as, for example, weather conditions, short-term trends and occurrence of events. We assume that the effect of the external factor is uniform across all customer demands. A ``high'' external factor increases the likelihood of observing above-average demands, while a ``low'' factor causes the opposite effect. Therefore, our model represents the case where all customer demands are \emph{positively} correlated. The external factor is unknown at both route planning and route execution stages. The key idea is to start with some prior knowledge about the external factor, and update this knowledge in a Bayesian fashion as customer demands are revealed.

Throughout this section, we assume a fixed route $\theta=(v_{1},\ldots,v_{H})$. For ease of notation, we reindex the customers in $\mathcal{V}^{+}$ in such a way that $v_{i}=i$ for all $v_{i}\in\theta$. We let $\overline{\chi}\in\mathbb{R}_{+}$ be the true (but unknown) external factor, and we let $\hat{\chi}_{i}$ be continuous random variables (r.vs.) representing our best knowledge about $\overline{\chi}$ before each customer $i\in\theta$ is visited. We let $\mu_{i}$, $i\in\theta$, be customer demand rates, as obtained, e.g., by taking the average of historical demands. To model positive correlation, we assume that the external factor affects demands in a multiplicative way, that is, $\mathbb{E}[\xi_{i}|\overline{\chi}]=\mu_{i}\overline{\chi}$, and define r.vs. $\lambda_{i}\equiv\mu_{i}\hat{\chi}_{i}$ to represent our best knowledge about demand rates under the effect of $\overline{\chi}$. We further denote by $f_{\tilde{c}}(\cdot)$ the probability density function of a continuous r.v. $\tilde{c}$, and by $p_{\tilde{d}}(\cdot)$ the probability mass function of a discrete r.v. $\tilde{d}$.

From Bayes' theorem, we have for all $i\in\theta$:
\begin{equation} \label{eq:bayes}
f_{\lambda_{i}|\xi_{i}}(l|x^{(i)})=\frac{f_{\lambda_{i}}(l)\cdot p_{\xi_{i}|\lambda_{i}}(x^{(i)}|l)}{p_{\xi_{i}}(x^{(i)})}.
\end{equation}

In Bayesian terms, the left-hand side of \eqref{eq:bayes} is the \emph{posterior} distribution of $\lambda_{i}$; the first term in the numerator corresponds to the \emph{prior} distribution of $\lambda_{i}$, and the second term is the likelihood function. The term in the denominator is the normalization constant, so the resulting posterior is a valid probability density function. Hence, we condition on the realized demand of customer $i$ to improve our understanding about $\lambda_{i}$. After customer $i$ is visited and a demand of $x^{(i)}$ is observed, the updated knowledge about the external factor is represented by the r.v. $\hat{\chi}_{i+1}\equiv(\lambda_{i}|\xi_{i}=x^{(i)})/\mu_{i}$.

The proposed learning mechanism can be applied irrespective of customer demands and external factor distributions. However, in order to derive closed-form expressions, it is necessary to place distributional assumptions. In what follows, we assume that $\xi_{i}$ is Poisson distributed with rate $\lambda_{i}$. Further, r.v. $\hat{\chi}_{1}$, which represents the prior knowledge about the external factor, is gamma distributed with shape and scale parameters given by $k_{0}$ and $s_{0}$, respectively. These distributions are particularly convenient because the gamma distribution is the conjugate prior of the Poisson distribution \citep{hoff2009first}; therefore, r.vs. $\hat{\chi}_{2},\ldots,\hat{\chi}_{H}$ are also gamma distributed. Given these assumptions, we obtain the following distributions before and after the first customer is visited:
\begin{align}
\lambda_{1}\equiv\mu_{1}\hat{\chi}_{1}&\sim\Gamma\left(k_{0},\mu_{1}s_{0}\right),\nonumber\\
\lambda_{1}\given\xi_{1}=x^{(1)}&\sim\Gamma\left(k_{0}+x^{(1)},\frac{\mu_{1}s_{0}}{1+\mu_{1}s_{0}}\right),\nonumber\\
\hat{\chi}_{2}\equiv\frac{(\lambda_{1}\given\xi_{1}=x^{(1)})}{\mu_{1}}&\sim\Gamma\left(k_{0}+x^{(1)},\frac{s_{0}}{1+\mu_{1}s_{0}}\right),\nonumber
\end{align}
where $\Gamma(k,s)$ denotes the gamma distribution with shape and scale parameters given by $k$ and $s$, respectively.

With the updated knowledge about the external factor after the first customer is visited, which is represented by r.v. $\hat{\chi}_{2}$, we determine $\lambda_{2}\equiv\mu_{2}\hat{\chi}_{2}$ and the learning procedure iterates. In general, after observing demands $x^{(1)},\ldots,x^{(i-1)}$ we have:
\begin{equation}\nonumber
\hat{\chi}_{i}\sim\Gamma\left(k_{0}+X_{i-1},\frac{s_{0}}{1+\sum_{j=1}^{i-1}\mu_{j}s_{0}}\right),
\end{equation}
where
\begin{equation}\nonumber
X_{i}=\sum_{j=1}^{i}x^{(j)}.
\end{equation}

The following theorem shows that the learning procedure takes the external factor knowledge asymptotically closer to the true external factor:
\begin{theorem}\label{thm:converg}
Consider a sequence $\theta=(1,2,\ldots)$ with infinitely many customers. Then, the sequence of r.vs. $(\hat{\chi}_{i})_{i\in\theta}$ converges in mean square to $\overline{\chi}$.
\end{theorem}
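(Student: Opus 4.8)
The plan is to establish mean square convergence directly, by showing that $\mathbb{E}[(\hat{\chi}_i - \overline{\chi})^2] \to 0$ as $i \to \infty$, where the expectation is taken under the true data-generating law $\xi_j \sim \mathrm{Poisson}(\mu_j\overline{\chi})$ (conditionally independent given $\overline{\chi}$). The essential subtlety is that $\hat{\chi}_i$ is random in two distinct ways: it is a posterior random variable whose parameters depend on the observed cumulative demand $X_{i-1} = \sum_{j<i}\xi_j$, and $X_{i-1}$ is itself random. To separate these effects I would condition on $X_{i-1}$ and apply the bias--variance decomposition via the tower property:
\[
\mathbb{E}[(\hat{\chi}_i - \overline{\chi})^2] = \mathbb{E}\bigl[\mathrm{Var}(\hat{\chi}_i \given X_{i-1})\bigr] + \mathbb{E}\bigl[(\mathbb{E}[\hat{\chi}_i \given X_{i-1}] - \overline{\chi})^2\bigr].
\]

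First I would compute the conditional moments. Writing $M_{i-1} = \sum_{j=1}^{i-1}\mu_j$ and using the stated posterior $\hat{\chi}_i \sim \Gamma(k_0 + X_{i-1},\, s_0/(1+s_0 M_{i-1}))$ together with the Gamma identities (mean $ks$, variance $ks^2$), the conditional mean is $(k_0+X_{i-1})s_0/(1+s_0 M_{i-1})$ and the conditional variance is $(k_0+X_{i-1})s_0^2/(1+s_0 M_{i-1})^2$. Next I would take expectations over $X_{i-1}$ under the true law, for which $\mathbb{E}[X_{i-1}\given\overline{\chi}] = \mathrm{Var}(X_{i-1}\given\overline{\chi}) = M_{i-1}\overline{\chi}$, since the Poisson variance equals its mean and the demands are conditionally independent. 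The conditional bias $B(X_{i-1}) := \mathbb{E}[\hat{\chi}_i\given X_{i-1}] - \overline{\chi}$ is affine in $X_{i-1}$; upon taking its expectation the $\overline{\chi}\,M_{i-1}\,s_0$ contributions cancel, leaving $\mathbb{E}[B(X_{i-1})] = (k_0 s_0 - \overline{\chi})/(1 + s_0 M_{i-1})$, and hence $\mathbb{E}[B(X_{i-1})^2] = (k_0 s_0 - \overline{\chi})^2/(1+s_0 M_{i-1})^2 + s_0^2 M_{i-1}\overline{\chi}/(1+s_0 M_{i-1})^2$. The expected conditional variance evaluates to $(k_0 + M_{i-1}\overline{\chi})\,s_0^2/(1+s_0 M_{i-1})^2$.

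Collecting the three contributions, the mean square error is a sum in which every numerator grows at most linearly in $M_{i-1}$ while every denominator is $(1 + s_0 M_{i-1})^2$, which grows quadratically; each term therefore tends to zero, and the whole expression vanishes provided $M_{i-1} = \sum_{j=1}^{i-1}\mu_j \to \infty$. I would thus conclude under the natural hypothesis that the cumulative demand rate diverges, which holds in particular whenever the rates are bounded below by a positive constant, so that $M_{i-1} \geq (i-1)\mu_{\min}$. The main obstacle is not the moment computations, which are routine once the Gamma and Poisson identities are invoked, but the double-randomness bookkeeping: recognizing that the outer expectation must be taken under the true Poisson law while the posterior carries $X_{i-1}$ into the Gamma shape parameter, and that the bias--variance split is precisely what disentangles the shrinking posterior spread from the sampling fluctuation of the accumulated observations.
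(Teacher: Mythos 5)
Your proof is correct and rests on the same underlying mechanism as the paper's: a bias--variance decomposition of the mean square error, explicit Gamma and Poisson moment computations, and the observation that every numerator grows at most linearly in the cumulative rate while every denominator grows quadratically. The difference lies in how the two sources of randomness are handled. The paper decomposes into the unconditional variance plus the squared unconditional bias, then writes $\mathrm{Var}(\hat{\chi}_{i})=(k_{0}+X_{i-1})\,s_{0}^{2}/(1+s_{0}\sum_{j}\mu_{j})^{2}$ with the random $X_{i-1}$ still in the numerator and argues informally that this ``grows linearly with $i$''; strictly speaking, that expression is the conditional variance given $X_{i-1}$, and the unconditional variance carries an additional term $\mathrm{Var}(\mathbb{E}[\hat{\chi}_{i}\mid X_{i-1}])$ arising from the sampling fluctuation of $X_{i-1}$. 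Your version conditions on $X_{i-1}$ first and takes outer expectations of both pieces, so that extra contribution appears explicitly (as $s_{0}^{2}M_{i-1}\overline{\chi}/(1+s_{0}M_{i-1})^{2}$) and is shown to vanish, which tightens the paper's argument. You also state explicitly the hypothesis $\sum_{j}\mu_{j}\to\infty$ (e.g., rates bounded below), which the paper leaves implicit when asserting quadratic growth of the denominator ``with $i$.'' For the bias term, your affine computation in $X_{i-1}$ and the paper's law-of-iterated-expectations step are equivalent. In short: same route, with your bookkeeping of the double randomness being the more rigorous of the two.
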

\begin{proof}
$(\hat{\chi}_{i})_{i\in\theta}$ converges in mean square to $\overline{\chi}$ if and only if:
\begin{equation} \label{eq:convergencemq}
0=\lim_{i\to\infty}\mathbb{E}\bigl[(\hat{\chi}_{i}-\overline{\chi})^{2}\bigr]=\lim_{i\to\infty}\left(\mathrm{Var}(\hat{\chi}_{i})+\mathbb{E}\bigl[\hat{\chi}_{i}-\overline{\chi}\bigr]^{2}\right).
\end{equation}

We show that both terms on the right-hand side of \eqref{eq:convergencemq} converge to zero, and hence the equality holds. First, note that the variance of the gamma distributed r.v. $\hat{\chi}_{i}$ is given by:
\begin{equation}\label{eq:varextf}
\mathrm{Var}(\hat{\chi}_{i})=\left(k_{0}+X_{i-1}\right)\left(\frac{s_{0}}{1+\sum_{j=1}^{i-1}\mu_{j}s_{0}}\right)^{2}.
\end{equation}

The numerator in \eqref{eq:varextf} grows linearly with $i$, and the denominator grows quadratically with $i$. The other terms are constant, so it follows that $\mathrm{Var}(\hat{\chi}_{i})=0$ when $i\to\infty$.

We now show that $\mathbb{E}[\hat{\chi}_{i}]$ converges to $\overline{\chi}$. By conditioning on $\xi_{1},\ldots,\xi_{i-1}$, and considering again that $\hat{\chi}_{i}$ is gamma distributed, we obtain:
\begin{equation}\label{eq:condexp}
\mathbb{E}\bigl[\hat{\chi}_{i}\given\xi_{1},\ldots,\xi_{i-1}\bigr]=\left(k_{0}+\sum_{j=1}^{i-1}\xi_{j}\right)\left(\frac{s_{0}}{1+\sum_{j=1}^{i-1}\mu_{j}s_{0}}\right).
\end{equation}

Applying the law of iterated expectations and reorganizing the terms of \eqref{eq:condexp}, we have:
\begin{equation}\label{eq:explaw}
\mathbb{E}\bigl[\hat{\chi}_{i}\bigr]=\mathbb{E}\bigl[\mathbb{E}\bigl[\hat{\chi}_{i}\given\xi_{1},\ldots,\xi_{i-1}\bigr]\bigr]=\mathbb{E}\left[\frac{k_{0}s_{0}+\sum_{j=1}^{i-1}\xi_{j}s_{0}}{1+\sum_{j=1}^{i-1}\mu_{j}s_{0}}\right].
\end{equation}

As $i\to\infty$, the first term in \eqref{eq:explaw} converges to zero and, since $\mathbb{E}[\xi_{i}]=\mu_{i}\overline{\chi}$, the second term converges to $\overline{\chi}$.
\end{proof}

Note that this convergence result holds regardless of the values chosen for parameters $k_{0}$ and $s_{0}$ that specify the distribution of the prior knowledge on the external factor.

\subsection{Optimal Restocking}\label{sec:bayoptrest}
The updated knowledge about the external factor after customer $i-1$ is visited, represented by r.v. $\hat{\chi}_{i}$, allows us to update the prior on the demand rate of customer $i$, represented by r.v. $\lambda_{i}=\mu_{i}\hat{\chi}_{i}$. Considering again the Poisson-gamma model \citep{hoff2009first}, the posterior predictive of $\xi_{i}$ given $x^{(1)},\ldots,x^{(i-1)}$ follows a negative binomial distribution with parameters $k_{0}+X_{i-1}$ (maximum number of failures in the experiment) and $\rho_{i}/(1+\rho_{i})$ (success probability of each trial), where $\rho_{i}$ is given by:
\begin{equation}\nonumber
\rho_{i}=\frac{s_{0}\mu_{i}}{1+s_{0}\sum_{j=1}^{i-1}\mu_{j}}.
\end{equation}

Hence, the posterior predictive demand distribution of customer $i$ depends on the \emph{sum} of actual demands observed until customer $i-1$ (that is, on $X_{i-1}$), and not individually on observations $x^{(1)},\ldots,x^{(i-1)}$. This suggests a tractable extension of the optimal restocking algorithm from Section~\ref{sec:optrestock} to the case of correlated demands, in which a new state variable is introduced in the dynamic program to keep track of $X_{i}$.

We now present the optimal restocking algorithm for the case of correlated demands. Rather than defining new symbols, we recycle the notation already introduced in Section \ref{sec:optrestock} by ``overloading'' functions with the new state variable. The auxiliary functionals are redefined as follows:
\begin{gather*}
\phi'(i,j,q,X,\Phi(\cdot,\cdot))=c_{ij}+\mathbb{E}\Bigl[\Upsilon_{\xi_{j}}^{q}(c_{j0}+c_{0j})+\Phi(q+Q\Upsilon_{\xi_{j}}^{q}-\xi_{j},X+\xi_{j})\Bigr],\\
\phi''(i,j,X,\Phi(\cdot,\cdot))=c_{i0}+c_{0j}+\mathbb{E}\Bigl[\Upsilon_{\xi_{j}}^{Q}(c_{j0}+c_{0j})+\Phi(Q+Q\Upsilon_{\xi_{j}}^{Q}-\xi_{j},X+\xi_{j})\Bigr],
\end{gather*}
where $\Phi:\{0,\ldots,Q\}\times\mathbb{N}\mapsto\mathbb{R}$ is the cost-to-go once customer $j$ is fully served, as a function of the remaining capacity \emph{and} of the total demand already observed.

Given a route $\theta=(v_{1},\ldots,v_{H})$, we denote by $\Phi_{\theta}^{k}(q,X)$ the expected cost-to-go along $\theta$ once customer $v_{k}$ is fully served, the remaining vehicle capacity is $q$, and the total demand of customers $v_{1},\ldots,v_{k}$ is $X$. We compute this cost by solving the dynamic program:
\begin{equation}\nonumber
\Phi_{\theta}^{k}(q,X)=\begin{cases}
\min\bigl\{\phi'(v_{k},v_{k+1},q,X,\Phi_{\theta}^{k+1}(\cdot,\cdot)),\phi''(v_{k},v_{k+1},X,\Phi_{\theta}^{k+1}(\cdot,\cdot))\bigr\},& \text{if }k<H,\\
c_{v_{H}0},& \text{if }k=H.
\end{cases}
\end{equation}

Finally, the cost of route $\theta$ under optimal restocking, assuming correlated demands, is given by:
\begin{equation}\nonumber
g^{*}(\theta)=\phi'(0,v_{1},Q,0,\Phi_{\theta}^{1}(\cdot,\cdot)).
\end{equation}

Under the assumption that $\mathbb{P}[\xi_{i}\leq Q]=1$ for all $i\in\theta$, the dynamic programming algorithm has a time complexity of $\mathcal{O}(|\theta|^{2}Q^{3})$, which compares with a complexity of $\mathcal{O}(|\theta|Q^{2})$ for the case of independent demands.

\section{Computational Results}\label{sec:results}
We perform two sets of computational experiments. In Section \ref{sec:resbpnc}, we compare the elementary BP\&C algorithm (E-BP\&C) from Section \ref{sec:bpnc} with the dominance-based BP\&C (D-BP\&C) by \cite{Florio_2020}. In Section \ref{res:corr}, we assess the benefit of applying the restocking policy proposed in Section \ref{sec:bayoptrest} when customer demands are positively correlated.

The E-BP\&C algorithm is implemented in C++ and is available at \url{https://github.com/amflorio/vrpsd-optimal-restocking}. All computational experiments are performed on a single thread of an Intel\textsuperscript{\textregistered} Xeon\textsuperscript{\textregistered} E5-2683 v4 (2.10GHz) processor with 64GB of available memory, and IBM\textsuperscript{\textregistered} CPLEX\textsuperscript{\textregistered} version 12.10 is used to solve linear programs.

\subsection{Elementary Branch-Price-and-Cut}\label{sec:resbpnc}
We base our comparison on instances of the sets A, E and P of the CVRPLIB \citep{uchoa2017new}, and also instances of the set X with a low $n/m$ ratio. In total, we consider $45$ instances with 36 to 194 customers and $n/m$ ratios varying from $3$ to $10$. \hl{Customer demands follow Poisson distributions with rates as given by the deterministic demands in the original instances. Similarly to }\cite{Florio_2020}\hl{, we truncate the Poisson distributions at $\varepsilon=10^{-5}$, meaning that all probabilities less than $\varepsilon$ are set to zero. Still, the algorithm remains effective for smaller $\varepsilon$ values such as $10^{-6}$ or $10^{-7}$.}

As determined empirically by \cite{Florio_2020}, many of the considered instances exhibit a significant VSS (i.e., 5\% or more). We note that neither D-BP\&C nor E-BP\&C is competitive for solving the instances proposed by \cite{Louveaux_2018}, which have large $n/m$ ratios (up to 50) but display, on average, very small VSS (i.e., less than 1\%). \hl{Best overall results are obtained by employing only RCSP completion bounds, as the computation of knapsack bounds require additional runtime that, in most instances, is larger than the runtime savings observed in the pricing algorithm. Therefore, except where noted otherwise, the results in this section are produced by a BP\&C implementation that employs RCSP bounds only.}

\begin{table}[t]
\centering
\begingroup
\small
\renewcommand{\arraystretch}{0.7750}
\caption{\label{tab:ebpncfreeA}Results E-BP\&C: Unlimited Fleet Size, $f=1.00$ (Set A)}
\begin{tabular}{lrrrrrrrrrr}
\toprule
& \multicolumn{4}{c}{D-BP\&C} & \multicolumn{6}{c}{E-BP\&C} \\
\cmidrule(l){2-5}\cmidrule(l){6-11}
Instance & Best & Paths & Gap & Time & Best & Paths & Gap & Time & RCCs & SRCs \\
\midrule
A-n36-k5 & 866.740 & 5 & 1.21\% & 300 & 862.309 & 5 &  & 34 & 29 & 80 \\
A-n37-k5 & 710.068 & 5 &  & 47 & 710.068 & 5 &  & 3 & 17 & 40 \\
A-n37-k6 & 1033.08 & 7 &  & 7 & 1033.08 & 7 &  & 1 & 39 & 40 \\
A-n38-k5 & 779.406 & 6 &  & 17 & 779.406 & 6 &  & 1 & 16 & 32 \\
A-n39-k5 & 875.618 & 6 &  & 2 & 875.618 & 6 &  & 1 & 17 & 0 \\
A-n39-k6 & 877.922 & 6 &  & 6 & 877.922 & 6 &  & 1 & 20 & 24 \\
A-n44-k6 & 1026.45 & 7 &  & 155 & 1026.45 & 7 &  & 16 & 18 & 82 \\
A-n45-k6 & 1029.68 & 7 &  & 78 & 1029.68 & 7 &  & 5 & 40 & 74 \\
A-n45-k7 & 1264.94 & 7 &  & 14 & 1264.94 & 7 &  & 2 & 13 & 16 \\
A-n46-k7 & 1003.23 & 7 &  & 14 & 1003.23 & 7 &  & 1 & 27 & 16 \\
A-n48-k7 & 1189.10 & 7 & 0.07\% & 300 & 1189.10 & 7 &  & 17 & 40 & 16 \\
A-n53-k7 & 1127.19 & 8 & 0.43\% & 300 & $^*$1127.22 & 8 & 0.22\% & 600 & 26 & 64 \\
A-n54-k7 & 1292.53 & 8 & 0.14\% & 300 & 1292.53 & 8 &  & 33 & 37 & 64 \\
A-n55-k9 & 1183.93 & 10 &  & 9 & 1183.93 & 10 &  & 1 & 34 & 24 \\
A-n60-k9 & 1535.52 & 10 & 1.65\% & 300 & 1533.38 & 10 & 0.57\% & 600 & 88 & 144 \\
A-n61-k9 & 1157.86 & 10 & 1.54\% & 300 & 1149.00 & 10 &  & 133 & 60 & 152 \\
A-n62-k8 & -- & -- & -- & 300 & 1434.71 & 9 &  & 185 & 30 & 16 \\
A-n63-k9 & -- & -- & -- & 300 & 1850.81 & 10 & 0.14\% & 600 & 51 & 64 \\
A-n63-k10 & 1463.24 & 11 & 0.62\% & 300 & 1464.06 & 11 & 0.45\% & 600 & 36 & 48 \\
A-n64-k9 & -- & -- & -- & 300 & 1583.26 & 10 & -- & 600 & 20 & 0 \\
A-n65-k9 & 1318.95 & 10 &  & 121 & 1318.95 & 10 &  & 4 & 41 & 64 \\
A-n69-k9 & 1264.42 & 10 & 0.31\% & 300 & $^*$1264.42 & 10 & 0.17\% & 600 & 32 & 32 \\
\bottomrule
\end{tabular}
\caption*{$^*$Results obtained with knapsack bounds enabled.}
\endgroup
\end{table}

\begin{table}[t]
\centering
\begingroup
\small
\renewcommand{\arraystretch}{0.7750}
\caption{\label{tab:ebpncfreeEPX}Results E-BP\&C: Unlimited Fleet Size, $f=1.00$ (Sets E, P and X)}
\begin{tabular}{lrrrrrrrrrr}
\toprule
& \multicolumn{4}{c}{D-BP\&C} & \multicolumn{6}{c}{E-BP\&C} \\
\cmidrule(l){2-5}\cmidrule(l){6-11}
Instance & Best & Paths & Gap & Time & Best & Paths & Gap & Time & RCCs & SRCs \\
\midrule
E-n51-k5 & 556.800 & 6 & 0.83\% & 300 & $^*$556.800 & 6 & 0.37\% & 600 & 39 & 160 \\
E-n76-k7 & 703.811 & 7 & 0.42\% & 300 & 705.560 & 7 & 0.45\% & 600 & 33 & 48 \\
E-n76-k8 & 778.423 & 9 & 1.04\% & 300 & 777.721 & 9 & 0.30\% & 600 & 43 & 128 \\
E-n76-k10 & 891.224 & 11 &  & 227 & 891.224 & 11 &  & 8 & 31 & 64 \\
E-n76-k14 & 1122.36 & 16 &  & 228 & 1122.36 & 16 &  & 16 & 60 & 80 \\
E-n101-k14 & 1182.99 & 15 & 0.69\% & 300 & 1182.99 & 15 & 0.32\% & 600 & 117 & 382 \\
P-n40-k5 & 475.705 & 5 &  & 0 & 475.705 & 5 &  & 0 & 12 & 0 \\
P-n45-k5 & 537.237 & 5 & 0.15\% & 300 & $^*$537.237 & 5 &  & 191 & 19 & 136 \\
P-n50-k7 & 587.314 & 7 &  & 5 & 587.314 & 7 &  & 3 & 24 & 80 \\
P-n50-k8 & 673.430 & 9 &  & 6 & 673.430 & 9 &  & 1 & 34 & 72 \\
P-n50-k10 & 763.212 & 11 &  & 17 & 763.212 & 11 &  & 2 & 45 & 64 \\
P-n51-k10 & 814.687 & 11 &  & 1 & 814.687 & 11 &  & 0 & 21 & 24 \\
P-n55-k7 & 591.846 & 7 &  & 18 & 591.846 & 7 &  & 2 & 19 & 72 \\
P-n55-k10 & 746.080 & 10 &  & 25 & 746.080 & 10 &  & 1 & 23 & 80 \\
P-n55-k15 & 1073.11 & 18 &  & 1 & 1073.11 & 18 &  & 1 & 40 & 11 \\
P-n60-k10 & 805.176 & 11 &  & 74 & 805.176 & 11 &  & 5 & 56 & 80 \\
P-n60-k15 & 1089.23 & 16 &  & 0 & 1089.23 & 16 &  & 1 & 36 & 16 \\
P-n65-k10 & 857.481 & 10 & 0.29\% & 300 & 857.481 & 10 &  & 229 & 125 & 213 \\
P-n70-k10 & 888.060 & 11 & 0.02\% & 300 & 888.060 & 11 &  & 8 & 53 & 72 \\
X-n101-k25 & 30606.9 & 29 & 0.59\% & 300 & 30606.9 & 29 & 0.06\% & 600 & 279 & 113 \\
X-n110-k13 & 16734.1 & 14 & 0.33\% & 300 & 16734.0 & 14 &  & 182 & 64 & 80 \\
X-n148-k46 & 56584.4 & 50 & 0.01\% & 300 & 56584.4 & 50 &  & 111 & 341 & 0 \\
X-n195-k51 & 50025.3 & 58 & 0.57\% & 300 & 49955.5 & 58 & 0.17\% & 600 & 151 & 32 \\
\bottomrule
\end{tabular}
\caption*{$^*$Results obtained with knapsack bounds enabled.}
\endgroup
\end{table}

Tables \ref{tab:ebpncfreeA} and \ref{tab:ebpncfreeEPX} compare the results obtained by D-BP\&C \citep[extracted from][]{Florio_2020} and E-BP\&C on all 45 instances when assuming an unlimited vehicle fleet and a load factor $f=1.00$. Column ``Time'' reports the runtime in minutes; columns RCCs and SRCs report the number of RCCs and SRCs separated by the algorithm. A dash (`--') in column ``Best'' indicates that no upper bound could be found, and a dash in column ``Gap'' indicates that no lower bound could be found. The results show that, with the exception of a single instance (E-n76-k7), E-BP\&C completely dominates D-BP\&C in terms of number of instances solved, solution time and optimality gap. We remark that the processor used in our experiments is ranked approximately 20\% faster than the processor employed by \cite{Florio_2020}, but the runtime savings by E-BP\&C exceed that amount by a large margin.

\begin{table}[thpb]
\centering
\begingroup
\small
\renewcommand{\arraystretch}{0.7750}
\caption{\label{tab:ebpncfix}Results E-BP\&C: Fixed Fleet Size, $f=1.00$}
\begin{tabular}{lrrrrrrrr}
\toprule
& \multicolumn{3}{c}{D-BP\&C} & \multicolumn{5}{c}{E-BP\&C} \\
\cmidrule(l){2-4}\cmidrule(l){5-9}
Instance & Best & Gap & Time & Best & Gap & Time & RCCs & SRCs \\
\midrule
A-n37-k6 & 1044.08 &  & 5 & 1044.08 &  & 2 & 28 & 8 \\
A-n38-k5 & 808.874 &  & 62 & 808.874 &  & 28 & 26 & 96 \\
A-n39-k5 & 887.547 &  & 51 & 887.547 &  & 7 & 18 & 16 \\
A-n44-k6 & 1029.60 &  & 7 & 1029.60 &  & 4 & 26 & 0 \\
A-n45-k6 & 1090.14 & 0.34\% & 300 & 1090.14 &  & 7 & 59 & 24 \\
A-n53-k7 & -- & -- & 300 & 1151.64 &  & 375 & 64 & 64 \\
A-n54-k7 & 1301.58 & 0.29\% & 300 & 1301.58 &  & 31 & 47 & 16 \\
A-n55-k9 & 1202.70 &  & 123 & 1202.70 &  & 8 & 93 & 48 \\
A-n60-k9 & 1556.67 & 2.36\% & 300 & 1531.37 &  & 332 & 50 & 88 \\
A-n61-k9 & 1190.23 & 1.33\% & 300 & 1190.23 &  & 86 & 108 & 80 \\
A-n62-k8 & -- & -- & 300 & 1454.03 & 1.50\% & 600 & 43 & 8 \\
A-n63-k9 & -- & -- & 300 & 1920.49 & -- & 600 & 48 & 0 \\
A-n63-k10 & -- & -- & 300 & 1479.97 &  & 361 & 68 & 96 \\
A-n64-k9 & -- & -- & 300 & 1597.47 & -- & 600 & 43 & 0 \\
A-n65-k9 & 1356.85 & 0.47\% & 300 & 1356.85 &  & 32 & 104 & 24 \\
A-n69-k9 & 1282.02 & 1.32\% & 300 & $^*$1277.68 & 0.58\% & 600 & 57 & 16 \\
E-n51-k5 & 556.795 &  & 183 & 556.795 &  & 40 & 42 & 24 \\
E-n76-k7 & 746.534 & 7.06\% & 300 & 711.797 & 1.58\% & 600 & 27 & 16 \\
E-n76-k8 & 804.390 & 4.49\% & 300 & $^*$779.926 & 0.90\% & 600 & 36 & 40 \\
E-n76-k10 & 918.224 & 1.29\% & 300 & 912.839 &  & 502 & 165 & 120 \\
E-n76-k14 & 1160.19 & 0.44\% & 300 & 1159.37 &  & 264 & 239 & 88 \\
E-n101-k14 & 1187.79 & 0.97\% & 300 & 1182.57 &  & 322 & 94 & 56 \\
P-n50-k8 & 713.671 & 1.12\% & 300 & 713.188 &  & 166 & 129 & 144 \\
P-n50-k10 & 775.579 &  & 8 & 775.579 &  & 2 & 72 & 48 \\
P-n51-k10 & 850.742 & 0.18\% & 300 & 850.742 &  & 6 & 130 & 56 \\
P-n55-k15 & 1176.60 & 0.10\% & 300 & 1176.60 &  & 19 & 261 & 80 \\
P-n60-k10 & 814.436 &  & 34 & 814.436 &  & 3 & 60 & 24 \\
P-n60-k15 & 1109.88 &  & 37 & 1109.88 &  & 4 & 135 & 40 \\
P-n70-k10 & 912.974 & 2.52\% & 300 & 912.974 &  & 469 & 184 & 144 \\
X-n101-k25 & 37273.2 & 3.06\% & 300 & 36894.7 & 0.96\% & 600 & 204 & 40 \\
X-n110-k13 & 17201.1 & 2.16\% & 300 & $^*$17072.7 & 1.26\% & 600 & 102 & 32 \\
X-n148-k46 & 57223.5 & 0.13\% & 300 & 57207.1 & 0.02\% & 600 & 1032 & 76 \\
X-n195-k51 & -- & -- & 300 & 58382.4 & 0.93\% & 600 & 377 & 16 \\
\bottomrule
\end{tabular}
\caption*{$^*$Results obtained with knapsack bounds enabled.}
\endgroup
\end{table}

The good performance of E-BP\&C is attributed to two main reasons. First, E-BP\&C controls more efficiently the combinatorial growth of labels in the pricing algorithm. The D-BP\&C method spends, on average, one-third of the computation time verifying dominance rules. Besides saving this computational effort, E-BP\&C employs strong completion bounds that are more effective than dominance rules in pruning unpromising partial paths in the labeling procedure. Second, E-BP\&C achieves tighter lower bounds by pricing only elementary routes and by separating a large number of SRCs. Such an aggressive separation of SRCs is not possible in D-BP\&C because it hinders label dominance and makes the pricing problem intractable.

Table \ref{tab:ebpncfix} reports the results on the $33$ instances that, when considering an unlimited fleet, either could not be solved to optimality or whose optimal solution uses more vehicles than the minimum required for feasibility. This time, we solve these instances by enforcing the fleet size limitation. The comparison is again very favorable to E-BP\&C. In summary, considering both cases (unlimited and fixed fleet sizes) and a load factor $f=1.00$, E-BP\&C solves the 32 instances solved by D-BP\&C and, additionally, 24 previously unsolved instances. \hl{Among the 24 newly solved instances, 18 are solved within the same 5 hours runtime allowed for D-BP\&C, and 6 are solved within the additional runtime allowed for E-BP\&C.}

We now consider instances with a load factor above $1.00$. Initially, we select the instances from sets A and P that could be solved for the limited fleet size case with load factor $f=1.00$, and resolve these instances with $f=1.25$ and $f=1.50$. We limit the number of vehicles to the minimum required and, to keep roughly the same $n/m$ ratio, we decrease the vehicle capacity to the integer value nearest to $Q_{0}/f$, where $Q_{0}$ is the original vehicle capacity in each instance. The results are reported in Table \ref{tab:lf}.

\begin{table}[t]
\centering
\begingroup
\small
\renewcommand{\arraystretch}{0.7750}
\caption{\label{tab:lf}Results E-BP\&C: Fixed Fleet Size, $f\in\{1.25,1.50\}$}
\begin{tabular}{lrrrrrr}
\toprule
 & \multicolumn{3}{c}{$f=1.25$} & \multicolumn{3}{c}{$f=1.50$} \\
 \cmidrule(l){2-4}\cmidrule(l){5-7}
Instance & Best & Gap & Time & Best & Gap & Time \\
\midrule
A-n36-k5 & 1025.51 &  & 265 & 1169.90 & 2.35\% & 600 \\
A-n37-k6 & 1274.22 &  & 265 & 1422.53 &  & 258 \\
A-n38-k5 & 956.391 &  & 259 & 1027.86 &  & 210 \\
A-n39-k5 & 1067.11 &  & 543 & 1204.04 & -- & 600 \\
A-n39-k6 & 1035.32 &  & 35 & 1152.73 &  & 134 \\
A-n44-k6 & 1278.70 & 1.09\% & 600 & 1390.14 & 0.92\% & 600 \\
A-n45-k6 & 1296.62 &  & 76 & 1380.03 &  & 139 \\
A-n45-k7 & 1618.09 & 1.12\% & 600 & 1856.26 & -- & 600 \\
A-n48-k7 & 1500.52 & -- & 600 & 1714.14 & -- & 600 \\
A-n53-k7 & 1448.50 & -- & 600 & 1603.92 & -- & 600 \\
A-n54-k7 & 1674.25 & -- & 600 & 1901.88 & -- & 600 \\
A-n55-k9 & 1521.48 &  & 359 & 1675.36 & 0.44\% & 600 \\
A-n60-k9 & 1982.64 & -- & 600 & 2206.92 & -- & 600 \\
A-n61-k9 & 1457.36 & 0.07\% & 600 & 1589.73 & 0.44\% & 600 \\
A-n63-k10 & 1849.51 & -- & 600 & 2090.84 & -- & 600 \\
A-n65-k9 & 1761.26 & -- & 600 & 1918.98 & -- & 600 \\
P-n40-k5 & 560.990 &  & 87 & 611.133 &  & 166 \\
P-n45-k5 & 628.288 &  & 181 & 692.645 & 2.62\% & 600 \\
P-n50-k7 & 708.412 &  & 120 & 779.710 & 0.16\% & 600 \\
P-n50-k8 & 847.160 &  & 68 & 913.388 &  & 192 \\
P-n50-k10 & 948.558 &  & 45 & 1027.28 &  & 5 \\
P-n51-k10 & 1034.66 &  & 49 & 1126.52 &  & 64 \\
P-n55-k10 & 915.545 &  & 20 & 999.292 &  & 10 \\
P-n55-k15 & 1391.91 &  & 2 & 1497.86 &  & 29 \\
P-n60-k10 & 1004.45 &  & 255 & 1101.04 & 0.02\% & 600 \\
P-n60-k15 & 1372.09 &  & 2 & 1526.30 &  & 26 \\
P-n65-k10 & 1055.37 & 0.004\% & 600 & 1153.57 & 0.13\% & 600 \\
P-n70-k10 & 1106.96 & 0.07\% & 600 & 1216.71 & 0.86\% & 600 \\
\bottomrule
\end{tabular}
\endgroup
\end{table}

Out of the 28 instances, 17 can be solved when $f=1.25$, and 11 can be solved when $f=1.50$. A comparison with D-BP\&C does not apply in this case, since D-BP\&C relies on heuristic dominance rules to solve instances with $f>1$. In fact, no previous work reports optimal solutions to VRPSD instances with $f>1$. Even though ratios $n/m$ remain constant, problem difficulty increases markedly with larger load factors, mainly because the completion bounds lose effectiveness. In particular, the RCSP bound does not consider restocking costs in the forward path, and these costs become more relevant when vehicles must replenish more often. As a result, the cost of the forward path is underestimated and the resulting bound becomes weaker.

\subsection{Optimal Restocking for Correlated Demands}\label{res:corr}
\hl{Customer demands are correlated whenever they depend on the same covariates. For example, the weather affects the demand for soft drinks, and city events affect the amount of waste to be collected from public thrash cans. Hence, demand correlation plays an important role in applications such as replenishment of vending machines and waste collection.} Our next goal is to measure the cost savings when adopting a restocking policy that takes account of positive demand correlation. To this end, we compare the policy proposed in Section \ref{sec:bayoptrest} (OR-C) with the classical optimal restocking policy (OR-I), in which demands are assumed independent. The comparison is performed by simulating both policies over a large number of routes and under correlated demands. The savings may be interpreted as the \emph{cost of independence}, that is, the cost of assuming independent demands when such assumption does not hold.

In these experiments, we consider the routes from the simulation study in \cite{FlorioReopt2021}. For each $n\in\{3,\ldots,15\}$, 20 routes are generated as follows. First, $n$ customers are randomly placed on a 1,000 by 1,000 grid, and the depot is located at the southwest corner of the grid. Then, a route $\theta$ is defined by solving a traveling salesman problem on the set of nodes. The demand of each customer $i\in\theta$ is Poisson distributed with rate $\mu_{i}\tilde{\chi}$, where $\mu_{i}$ and $\tilde{\chi}$ are uniformly distributed on the intervals $[10,100]$ and $[0.5, 1.5]$, respectively. We consider load factors $f\in\{1.3,1.6,1.9,2.5\}$ by setting the vehicle capacity to the integer value closest to $\sum_{i\in\theta}\mu_{i}/f$. For each route and load factor, we simulate and calculate the costs of both policies under $5,000$ randomly generated demand scenarios.

Table \ref{tab:orc} reports, for each route length (in terms of number of customers) and load factor, the average and maximum savings realized by OR-C over all simulations. As we see, the cost savings by OR-C may be quite substantial (in excess of 10\%). The savings are significant across all route lengths and load factors, which highlights the importance of taking demand correlation into account when prescribing replenishment decisions.

\begin{table}[t!]
\centering
\begingroup
\small
\renewcommand{\arraystretch}{0.7750}
\caption{\label{tab:orc}Cost Savings when Taking Account of Correlation}
\begin{tabular}{rrrrrrrrrr}
\toprule
 & \multicolumn{2}{c}{$f=1.3$} & \multicolumn{2}{c}{$f=1.6$} & \multicolumn{2}{c}{$f=1.9$} & \multicolumn{2}{c}{$f=2.5$} \\
\cmidrule(l){2-3}\cmidrule(l){4-5}\cmidrule(l){6-7}\cmidrule(l){8-9}
$n$ & Avg(\%) & Max(\%) & Avg(\%) & Max(\%) & Avg(\%) & Max(\%) & Avg(\%) & Max(\%) & Avg \\
\midrule
3 & 2.35 & 9.05 & 3.66 & 9.51 & 2.01 & 6.49 & 1.36 & 7.31 & 2.35 \\
4 & 1.67 & 7.31 & 3.06 & 7.96 & 1.74 & 4.29 & 2.47 & 6.92 & 2.24 \\
5 & 2.08 & 8.07 & 2.94 & 8.43 & 2.22 & 5.41 & 1.96 & 5.81 & 2.30 \\
6 & 1.30 & 4.05 & 3.20 & 9.81 & 2.68 & 8.98 & 2.01 & 6.44 & 2.30 \\
7 & 2.11 & 7.57 & 4.00 & 9.86 & 2.20 & 6.65 & 1.63 & 3.77 & 2.48 \\
8 & 1.41 & 4.84 & 2.72 & 7.30 & 1.81 & 4.48 & 2.17 & 8.93 & 2.03 \\
9 & 1.77 & 6.42 & 2.15 & 5.42 & 2.63 & 7.33 & 2.31 & 5.03 & 2.22 \\
10 & 1.41 & 3.52 & 2.94 & 7.36 & 2.93 & 7.55 & 2.33 & 6.99 & 2.40 \\
11 & 1.28 & 5.42 & 2.63 & 8.15 & 2.69 & 7.03 & 2.22 & 7.34 & 2.20 \\
12 & 1.42 & 6.39 & 2.63 & 10.19 & 3.10 & 8.13 & 2.32 & 6.36 & 2.37 \\
13 & 0.88 & 3.70 & 2.65 & 10.86 & 2.61 & 8.89 & 2.18 & 5.66 & 2.08 \\
14 & 2.41 & 7.48 & 2.37 & 6.29 & 2.17 & 5.93 & 3.08 & 7.86 & 2.51 \\
15 & 1.56 & 4.93 & 2.33 & 6.83 & 2.46 & 9.55 & 2.32 & 4.86 & 2.17 \\
Avg & 1.67 & & 2.87 & & 2.40 & & 2.18 & & 2.28 \\
\bottomrule
\end{tabular}
\endgroup
\end{table}

\section{Conclusions}\label{sec:conclusions}
\subsection{Main Contributions and Takeaways}
Recent years have witnessed substantial progress on VRPSD research. We reviewed the new exact methods for solving the problem, under both the restocking-based and the chance-constraint perspectives, and discussed the main modeling and algorithmic challenges related to each perspective. Concerning methodology, our original contributions are twofold. First, we introduced a new state-of-the-art algorithm for the VRPSD under optimal restocking. The E-BP\&C algorithm outperforms considerably the dominance-based BP\&C by \cite{Florio_2020} and allows the solution of several previously open literature instances. Second, we proposed a demand model for handling positively correlated demands and a restocking policy that takes account of demand correlation when prescribing preventive replenishment decisions.

The recent literature on the VRPSD covered some of the research gaps identified by \cite{Gendreauetal2016}, namely, how to address demand correlation and how to solve the restocking-based VRPSD under more sophisticated restocking policies. Demand correlation is now addressed in a few works, which allows us assess the cost of independence, that is, the cost of falsely assuming independent customer demands. This cost is quite significant when restocking is allowed, as measured in Section \ref{res:corr}: savings may exceed 10\% when correlation is taken into account. In the chance-constrained VRPSD, the cost of independence is a feasibility cost, in the sense that a solution obtained by assuming independent demands might not be feasible under correlated demands. Back to the restocking perspective, we notice a trend in solving the VRPSD under increasingly complex recourse actions. Until the review by \cite{Gendreauetal2016}, all algorithms for the restocking-based VRPSD assumed the detour-to-depot policy. Since then, several exact methods based on preventive restocking have been introduced. All in all, the recent advances helped to bridge the gap between theory and practice in vehicle routing with uncertain demands.

\subsection{The Way Forward}
We conclude this paper with a few research perspectives:

\textbf{Inter-route recourse policies.}
The results by \cite{FlorioReopt2021} show that there is only marginal benefit in allowing, during route execution, partial reoptimization of the customer visiting sequence. At the same time, there is currently no exact algorithm to solve the VRPSD under a \emph{pooling} recourse policy, in which vehicles may cooperate to serve all demand. \hl{Potential applications include package collection, in which drivers could deviate from their planned routes in order to support other drivers who are facing unexpectedly high demand.} Although cooperative recourse strategies have already been explored \citep[see, e.g.,][]{ak2007,zhu2014}, there is currently no measure of the benefits of such class of policies compared to optimal restocking. Assessing this benefit is crucial to determine whether the additional complexity of coordinating a vehicle fleet in real-time is justified.

\textbf{Distribution fairness.}
The chance-constrained VRPSD assumes that all unserved demand is ignored, at zero penalty. In a practical setting, the same VRPSD solution may be implemented at several periods, until there is enough new data to update the customer demand distributions and to solve the problem again with the updated distributions. Hence, a fairness issue arises as the same customers (i.e., the last ones along the routes) may repeatedly be underserved each time the solution is implemented and the vehicle capacity is exceeded. We believe that one promising avenue to promote a more balanced service is to implement chance-constrained solutions under a \emph{distribution fairness} policy. In this type of policy, each time a customer is visited a decision is made concerning the demand amount that is actually served, with the double goal of serving demands as much as possible and in a balanced way. \hl{An important problem in humanitarian logistics concerns the distribution of supplies to medical facilities to meet emergency (and uncertain) demand}\citep{Liu_2022}\hl{. Such problem setting would benefit from an allocation policy to distribute resources fairly among all customers.}

\textbf{Duration constraints.}
In many practical settings of restocking-based VRPSDs, ensuring that drivers return to the depot within a given timespan (e.g., due to working hours regulations) is more important than limiting (in expectation) the total demand along a route. Conversely, the number of replenishment trips actually performed may not be as important as finishing the operation on time. \hl{For example, in garbage collection applications trucks may typically unload at a landfill a few times per day.} Nevertheless, the method by \cite{Florio_2021}, which is only effective for solving instances with tight duration limits, remains the only exact algorithm for the VRPSD with duration constraints. Enforcing only duration constraints is significantly more challenging, because route duration is, itself, random, and because bounds derived from limiting the total demand are no longer available. Hence, extensive research is still required before more practical instances of this important problem variant can be solved.

\textbf{Demand correlation.}
Even though recent work explored some facets of demand correlation, there is still considerable ground to cover towards a more general methodology to deal with VRPSDs under correlation. \hl{As mentioned in Section }\ref{res:corr}\hl{, correlation appears whenever stochastic demands depend on a common factor, which may indeed be the case in applications such as heating oil delivery and waste collection.} The Bayesian demand model and restocking policy proposed in Section~\ref{sec:corr} treat the case of positive demand correlation among all customers. It is still an open question, however, how to prescribe optimal replenishment decisions given a fixed sequence of customers under an arbitrary demand correlation structure. Likewise, there is currently no exact method that is able to solve the restocking-based VRPSD under correlation.

\section*{Acknowledgement}
The authors are thankful to the editors and a reviewer for insightful comments and suggestions that helped us to improve this manuscript.


\end{document}